\newtheorem{Theorem}{Theorem}[section]
\newtheorem{Example}[Theorem]{Example}
\newtheorem{Remark}[Theorem]{Remark}
\newenvironment{Proof*}{{\it Proof.}}
\newcommand{\NN}{\mathbb{N}}
\newcommand{\ZZ}{\mathbb{Z}}
\newcommand{\BB}{\mathcal{B}}
\newcommand{\diag}{{\rm diag}}
\newcommand{\diam}[1]{{\rm diam}(#1)}
\newcommand{\girth}[1]{{\rm girth}(#1)}
\begin{document}

\title{The unitary Cayley graph of a semiring}
\author{David Dol\v zan}
\date{\today}

\address{D.~Dol\v zan:~Department of Mathematics, Faculty of Mathematics
and Physics, University of Ljubljana, Jadranska 21, SI-1000 Ljubljana, Slovenia; e-mail: 
david.dolzan@fmf.uni-lj.si}

 \subjclass[2010]{}
 \keywords{Cayley graph, semiring, unit, matrix, diameter}
  \thanks{The author acknowledges the financial support from the Slovenian Research Agency  (research core funding No. P1-0222)}

\bigskip

\begin{abstract} 
We study the unitary Cayley graph of a matrix semiring. We find bounds for its diameter, clique number and independence number, and determine its girth. We also find the relationship between the diameter and the clique number of a unitary Cayley graph of a semiring $S$ and a matrix semiring over $S$.
\end{abstract}

\maketitle 

\section{Introduction}

\bigskip

The study of graphs that are associated to different algebraic structures has been one of the most important and active areas in algebraic combinatorics during the past few years.  Among these graphs, there are a few that are of special importance, none more than the commuting graphs, zero-divisor graphs and (unitary) Cayley graphs. 

Unitary Cayley graphs have been studied as objects of independent interest (see \cite{berri, dejter, klotz}) but are of particular relevance in the study of graph representations. These studies begun in \cite{erdos} and continued in many other papers. In \cite{akhtar} the authors studied the unitary Cayley graph associated to a finite ring, determining its diameter, girth, eigenvalues, etc. This research area has remained very active, some recent interesting results for example include the study of unitary Cayley graphs of matrix rings (\cite{chen, ratta}) and generalized unitary Cayley graphs of finite rings (\cite{chelvam}).

As far as the author of this paper is aware, the unitary Cayley graph has not yet been studied in a semiring setting and this is the topic of this paper. A \emph{semiring} is a set $S$ equipped with binary operations $+$ and $\cdot$ such that $(S,+)$ is a commutative monoid 
with identity element 0 and $(S,\cdot)$ is a monoid with identity element 1. In addition, operations $+$ and $\cdot$
are connected by distributivity and 0 annihilates $S$. A semiring is \emph{commutative} if $ab=ba$ for all $a,b \in S$.

The theory of semirings has many possible applications in optimization theory, automatic control, models of discrete event networks and graph theory (see e.g. \cite{baccelli, cuninghame, li2014, zhao2010}).  For an extensive theory of semirings, we refer the reader to \cite{hebisch}. There are many natural examples of commutative semirings, for example, the set of nonnegative integers (or reals) with the usual operations of addition and multiplication. Other examples include distributive lattices, tropical semirings, dio\"{\i}ds, fuzzy algebras, inclines and bottleneck algebras. 
A semiring $S$ is called \emph{entire} if $ab=0$ for some $a, b \in S$ implies $a=0$ or $b=0$ and it is called \emph{antinegative} or \emph{zero-sum-free}, if $a+b=0$ for some $a, b \in S$ implies that $a=b=0$. Antinegative semirings are also called \emph{antirings}. The simplest example of an antinegative semiring  is the binary Boolean semiring $\BB$, the set $\{0,1\}$ in which addition and multiplication are the same as in $\ZZ$ except that $1+1=1$.

This paper is organized as follows. In the next section, we define some basic notions that we shall need throughout the paper. The third section is the main section of this paper, where we study the properties of unitary graphs of matrix semirings. In doing this, we rely on the fact that the structure of the group of units is well understood over some matrix semirings. Firstly, we study the diameter and girth of the unitary Cayley graph of a (matrix) semiring.
The diameter of a graph is an often studied problem in connection with different graphs prescribed to algebraic structures.

The diameter of the Cayley graph was first studied in 1988 in the case of symmetric groups (see \cite{babai}). Recently, the diameter of the unitary Cayley graph of a  ring was studied in \cite{akhtar} and \cite{su}. Also, the girth of the Cayley graph has been studied, for example in \cite{wrikat} in the case of dihedral groups, as well as many other settings.
Our main results include Theorem \ref{diammatS}, where we find the relationship between the diameter of a unitary Cayley graph of semiring $S$ and the diameter of a unitary Cayley graph of the $k$ by $k$ matrix semiring over $S$, and Theorem \ref{girth}, where we determine the girth of a unitary Cayley graph of a matrix semiring.
Finally, we tackle the somewhat related problems of the clique and independence numbers. While these are quite easy to calculate in the case of unitary Cayley graphs over finite rings (see \cite[Proposition 6.1]{akhtar}), it turns out that over semirings, the situation is somewhat more complicated.
The main results here include Theorem \ref{cn}, where we prove that the clique number of a unitary Cayley graph of an entire additively cancellative antiring $S$ is equal to the clique number of a $k$ by $k$ matrix semiring over $S$, and Theorem \ref{independence}, where we find the bounds for the independence number of a unitary Cayley graph of a matrix semiring.

\bigskip


\bigskip
\section{Preliminaries}
\bigskip

For a semiring $S$, we denote by $S^*$ the group of invertible elements in $S$.
We denote by $\Gamma(S)$ the \emph{unitary Cayley graph} of $S$. 
The vertex set $V(\Gamma(S))$ of $\Gamma(S)$ is the set of elements in $S$. 
Now, in the unitary Cayley graph of a ring, there is an edge 
between two distinct vertices $x$ and $y$ if $x-y \in S^*$ (which is of course, equivalent to the condition that $y-x \in S^*$). 
Since in general, we do not have subtraction in $S$,
we extend the above definition in a natural way and say that  an unordered pair of vertices $x,y \in V(\Gamma(S))$, $x \neq y$, is an edge 
$x \sim y$ in $\Gamma(S)$ if there exists $u \in S^*$ such that $x+u=y$ or $y+u=x$.

We shall need some further graph theoretical definitions. The sequence of edges $x_0 \sim x_1$, $ x_1 \sim x_2$, ..., $x_{k-1} \sim x_{k}$ in a graph is called \emph{a path of length $k$}. We shall denote this path by $x_0 \sim x_1 \sim x_2 \sim \ldots \sim x_k$. 
The \emph{distance} between vertices $x$ and $y$ is the length of the shortest
path between them, denoted by $d(x,y)$. If there is no path between $x$ and $y$, we define $d(x,y)=\infty$.
The \emph{diameter}, denoted $\diam{\Gamma}$, of the graph $\Gamma$ is the supremum of all distances between any two vertices of the graph.
The \emph{girth}, denoted $\girth{\Gamma}$, of the graph $\Gamma$ is the length of the shortest cycle in $\Gamma$. If there are no cycles in $\Gamma$, we say that $\girth{\Gamma}=\infty$.  The clique number of a graph $\Gamma$, denoted $\omega(\Gamma)$, is the number of vertices in a maximum clique of $\Gamma$. An independent set is a set of vertices in a graph, no two of which are adjacent, and the independence number of a graph $\Gamma$, denoted $\alpha(\Gamma)$, is the number of vertices in a maximum independent set of $\Gamma$.

We say that $x \in S$ is \emph{additively cancellative} if for any $z, y \in S$ such that $x+y=x+z$ we have $y=z$. Semiring $S$ is  \emph{additively cancellative} if every element of $S$ is additively cancellative.
A set $\{a_1,a_2,\ldots,a_r\} \subseteq S$ of nonzero elements is called an \emph{orthogonal decomposition of 1 } in $S$ if $a_1+a_2+\ldots+a_r=1$ and $a_ia_j=0$ for all $i \ne j$. 
For a semiring $S$, we denote by $M_n(S)$ the semiring of all $n$ by $n$ matrices with entries in $S$. We shall denote by $E_{ij} \in M_n(S)$ the matrix with $1$ at entry $(i,j)$ and zeros elsewhere.  
For a matrix $A \in M_n(S)$, we shall denote by $A_{ij} \in S$ the $(i,j)$-th entry of $A$.  
Furthermore, let $N_n$ denote the set $\{1,2,\ldots,n\}$ and let $S_n$ denote the symmetric group on the set $N_n$. For a permutation $\sigma \in S_n$, we shall denote by $P_\sigma$ the permutation matrix corresponding to permutation $\sigma$, thus $(P_\sigma)_{ij}=1$ if and only if $\sigma(i)=j$, and $0$ otherwise. Finally, we shall denote the diagonal matrix in $M_n(S)$ with elements $a_1,a_2,\ldots,a_n \in S$ along the diagonal by $\diag(a_1,a_2,\ldots,a_n)$.

The following theorem will be an essential tool in several proofs in this paper, so we state it explicitely.

\bigskip

\begin{Theorem}\cite[Theorem 1]{dolobl}
\label{invertible}
 If $S$ is a commutative antiring, then $A \in M_{n}(S)$ is invertible if and only if 
  $$A=D \sum\limits_{\sigma \in S_n}{a_\sigma P_\sigma}\, ,$$
 where $D$ is an invertible diagonal matrix, $P_{\sigma}$ is a permutation matrix and 
 $\sum_{\sigma \in S_n} a_\sigma=1$ is an orthogonal decomposition of $1$.
\end{Theorem}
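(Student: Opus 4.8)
I would establish the equivalence by treating the two implications separately; the forward implication carries all the weight. For the ``if'' direction, suppose $A=D\sum_{\sigma\in S_n}a_\sigma P_\sigma$ with $D$ invertible diagonal and $\{a_\sigma\}$ an orthogonal decomposition of $1$. Since $D$ is invertible by hypothesis it suffices to invert $B_0:=\sum_\sigma a_\sigma P_\sigma$, and I claim its inverse is $\sum_\sigma a_\sigma P_{\sigma^{-1}}$: multiplying the two sums, every cross term with $\sigma\neq\tau$ dies because $a_\sigma a_\tau=0$, and the surviving diagonal terms give $\bigl(\sum_\sigma a_\sigma^2\bigr)I$; expanding $1=1\cdot1=\bigl(\sum_\sigma a_\sigma\bigr)\bigl(\sum_\tau a_\tau\bigr)$ and discarding the same vanishing cross terms shows $\sum_\sigma a_\sigma^2=1$. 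Hence $A$ is invertible.

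For the ``only if'' direction, write $B:=A^{-1}$, so $AB=BA=I$. The starting point is that antinegativity forces every summand of a zero sum to be zero, so the off-diagonal identities $(AB)_{ij}=0=(BA)_{ij}$ ($i\neq j$) give $A_{ik}B_{kj}=0$ and $B_{ik}A_{kj}=0$ for all $k$. The key lemma is that within each row and each column of $A$ (and of $B$) the entries are pairwise orthogonal: for $j\neq j'$ one multiplies $A_{ij}A_{ij'}$ by $1=(AB)_{ii}=\sum_k A_{ik}B_{ki}$; in each resulting term the index $k$ differs from at least one of $j,j'$, say from $j''$, and then the subproduct $B_{ki}A_{ij''}$ vanishes as the $i$-th term of the zero entry $(BA)_{kj''}$, forcing $A_{ij}A_{ij'}=0$. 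The same kind of computation (using $(BA)_{jj}=1$ for columns, and swapping the roles of $A$ and $B$) yields the remaining orthogonality statements; one further such step, with $(AB)_{ii}$, gives $A_{ij}B_{ki}=0$ whenever $j\neq k$.

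With orthogonality in hand, put $d_i:=\sum_j A_{ij}$. Then $d_i\cdot\sum_k B_{ki}=\sum_{j,k}A_{ij}B_{ki}=\sum_j A_{ij}B_{ji}=(AB)_{ii}=1$ (all terms with $j\neq k$ drop out), so $d_i\in S^*$ and $D:=\diag(d_1,\dots,d_n)$ is invertible. Replacing $A$ by $D^{-1}A$ — which is still invertible and still has pairwise orthogonal rows and columns — we may assume every row of $A$ is an orthogonal decomposition of $1$. Now set $a_\sigma:=\prod_\ell A_{\ell,\sigma(\ell)}$. Expanding $1=\prod_\ell\bigl(\sum_j A_{\ell j}\bigr)$ and noting that any non-bijective index function repeats a column (hence yields a zero product) gives $\sum_\sigma a_\sigma=1$, while $a_\sigma a_\tau=0$ for $\sigma\neq\tau$ follows from row orthogonality at an index where $\sigma$ and $\tau$ disagree. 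Finally $A_{ij}=\sum_{\sigma(i)=j}a_\sigma$: expanding $A_{ij}=A_{ij}\prod_{\ell\neq i}\bigl(\sum_m A_{\ell m}\bigr)$ and discarding every index function that repeats a column or lands on column $j$ off row $i$ leaves exactly the bijections $N_n\setminus\{i\}\to N_n\setminus\{j\}$, i.e. the permutations $\sigma$ with $\sigma(i)=j$. Thus $A=D\sum_\sigma a_\sigma P_\sigma$ with $\{a_\sigma\}$ orthogonal, as required.

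I expect the main obstacle to be the row/column orthogonality lemma together with the careful bookkeeping it requires: this is precisely where antinegativity (rather than the mere absence of subtraction) is used, and it is what makes the concluding Birkhoff--von~Neumann-type expansion collapse onto the permutation terms. Once orthogonality is available, extracting $D$ and verifying the final identity are essentially formal.
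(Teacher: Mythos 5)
The paper does not prove this statement---it is quoted verbatim from \cite[Theorem~1]{dolobl}---so there is no in-paper argument to compare yours against; judged on its own, your proof is correct and complete. Both directions check out: the ``if'' direction's computation $\bigl(\sum_\sigma a_\sigma P_\sigma\bigr)\bigl(\sum_\tau a_\tau P_{\tau^{-1}}\bigr)=\bigl(\sum_\sigma a_\sigma^2\bigr)I=I$ is right, and in the ``only if'' direction the row/column orthogonality lemma, the extraction of $d_i=\sum_j A_{ij}\in S^*$, and the permanent-style expansions establishing $\sum_\sigma a_\sigma=1$, $a_\sigma a_\tau=0$, and $A_{ij}=\sum_{\sigma(i)=j}a_\sigma$ all use antinegativity and commutativity exactly where needed. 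Two cosmetic remarks: the identity $A_{ij}B_{ki}=0$ for $j\neq k$ follows immediately from the vanishing of each summand of $(BA)_{kj}=0$, so your ``one further step'' is not needed; and some of your $a_\sigma$ may be zero, so to match the paper's definition of an orthogonal decomposition (which requires nonzero elements) one should discard the zero terms, which is harmless.
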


\bigskip


\bigskip
\section{The properties of the unitary Cayley graph of a semiring}
\bigskip

In the first part of this section we shall study the diameter of the unitary Cayley graph of a semiring. The situation in the semiring setting turns out to be quite distinct from the one in rings. Let us start with a few illustrative examples.

\bigskip
\begin{Example}
\label{ex0}
Let $S=\NN_0$, the semiring of non-negative integers. Observe that $S^*=\{1\}$ and that $\Gamma(S)$ is a path graph, so it is connected, but $\diam{\Gamma(S)}=\infty$.
\end{Example}

\begin{Example}
\label{ex1}
Let $n$ be an integer and $S=N_n \cup \{0\}$, with the operations defined as $a\oplus b=\min\{a+b,n\}$ and $a\odot b=\min\{ab,n\}$. Observe that $(S, \oplus, \odot)$ is a semiring with $S^*=\{1\}$, so $d(0,n)=n$ and therefore $\diam{\Gamma(S)}=n$.
\end{Example}

\begin{Example}
\label{ex2}
Let $S=\BB[x]/(x^2)$. Observe that $S$ is a semiring with $S^*=\{1\}$, but $d(1,x)=\infty$, so $\Gamma(S)$ is disconnected and thus also $\diam{\Gamma(S)}=\infty$.
\end{Example}

\bigskip

Compare the above examples with the diameters of the unitary Cayley graphs over rings. If $R$ is a commutative Artinian ring, then $\diam{\Gamma(R)} \in \{1,2,3,\infty\}$ (see \cite[Theorem 3.1]{akhtar}), where $\diam{\Gamma(R)} = \infty$ if and only if $R$ is a direct product of local rings such that at least two of those local rings have their residue fields isomorphic to $GF(2)$. It turns out that the unitary Cayley graph of a semiring has an altogether more complicated structure. For example, the unitary Cayley graph of a commutative ring is always regular (\cite[Proposition 2.2]{akhtar}), while none of the graphs in Examples \ref{ex0} and \ref{ex1} are.

Therefore, we limit ourselves here to two special cases. We have the following theorem.

\bigskip

\begin{Theorem}
\label{diamS}
Let $S$ be a semiring such that $\Gamma(S)$ is a connected graph. Then
\begin{enumerate}
\item
If there exist positive integers $m < n$ such that $m \cdot 1 = n \cdot 1$ in $S$, then $\diam{\Gamma(S)} \leq 2(n-1)|S^*|$.
\item
If $S^*$ is closed under addition, then $\diam{\Gamma(S)} \leq 2$.
\end{enumerate}
\end{Theorem}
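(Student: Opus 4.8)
The plan is to deduce both parts from one structural fact about a connected $\Gamma(S)$. Call $w\in S$ a \emph{unit-majorant} of a pair $x,y\in S$ if there are $s_1,s_2\in S$, each a sum of finitely many (possibly zero) elements of $S^*$, with $x+s_1=w=y+s_2$. I claim that in a connected $\Gamma(S)$ every pair $x,y$ admits a unit-majorant, and I would prove this by induction on the length of a path $x=z_0\sim z_1\sim\dots\sim z_\ell=y$; the case $\ell=0$ is trivial. For $\ell\ge1$, the inductive hypothesis applied to $z_0\sim\dots\sim z_{\ell-1}$ gives a unit-majorant $w'$ of $z_0$ and $z_{\ell-1}$, say $z_0+a=w'=z_{\ell-1}+b$, and the last edge supplies $u\in S^*$ with either $z_{\ell-1}+u=z_\ell$ or $z_\ell+u=z_{\ell-1}$. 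In the first case $w:=w'+u$ works, since $w=z_\ell+b=z_0+(a+u)$; in the second case $w'$ itself works, since $w'=z_\ell+(u+b)=z_0+a$. In either case we obtain a unit-majorant of $x$ and $y$.

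Granting the claim, part (2) is immediate. If $S^*$ is closed under addition, every nonempty sum of elements of $S^*$ is a single unit, so a unit-majorant $w$ of distinct $x,y$ satisfies, for each of $x$ and $y$, either that it equals $w$ or that it is adjacent to $w$ in $\Gamma(S)$. Checking the cases $w=x$, $w=y$ and $w\notin\{x,y\}$ — using $x\ne y$ to discard the degenerate possibilities — yields $d(x,y)\le2$ throughout, hence $\diam{\Gamma(S)}\le2$.

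For part (1), I would first record that $m\cdot1=n\cdot1$ forces $m\cdot s=n\cdot s$ for every $s\in S$ (multiply the relation on the left by $s$ and distribute), so that $c\cdot s=(c+(n-m))\cdot s$ for all $c\ge m$; hence for each integer $c\ge1$ there is some $c'\in\{1,\dots,n-1\}$ — depending only on $c$ — with $c\cdot s=c'\cdot s$ for all $s$. Now take a unit-majorant $w$ of $x$ and $y$ and write $w=x+s_1$ with $s_1=\sum_{v\in S^*}c_v\,v$. Replacing each positive $c_v$ by the associated $c_v'\le n-1$ leaves $c_v\cdot v=c_v'\cdot v$ unchanged, so $w=x+\sum_{v\in S^*}c_v'\,v$ is reached from $x$ by at most $\sum_{v\in S^*}c_v'\le(n-1)\,|S^*|$ successive single-unit additions; reading off the corresponding walk (and deleting a vertex wherever an addition happens to be stationary) gives $d(x,w)\le(n-1)\,|S^*|$, and symmetrically $d(y,w)\le(n-1)\,|S^*|$. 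The triangle inequality then yields $d(x,y)\le2(n-1)\,|S^*|$.

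The substantive step is the structural claim: a path need not be monotone for the preorder $x\le x+s$, and the induction above is precisely what absorbs its ``zigzags'' into one common majorant. Everything afterwards is bookkeeping — reducing the coefficients $c_v$ modulo the eventual additive period of $1$, and replacing a sequence of unit-additions by an honest walk — so I expect the verification of the claim, checked across both orientations of the traversed edge, to be where the care is needed. One should also note that $S$ need not be finite; the argument only invokes the finiteness of $S^*$, which is implicit in the statement.
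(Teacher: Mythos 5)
Your proposal is correct and follows essentially the same route as the paper: your ``unit-majorant'' claim is exactly the paper's key structural step (rearranging a path to produce a common element $x+\alpha=y+\beta$ with $\alpha,\beta$ sums of units), and the subsequent reduction of multiplicities to at most $n-1$ per unit, respectively the closure of $S^*$ under addition, is the same bookkeeping. Your induction on path length is a cleaner formalization of the paper's ``apply this argument inductively'' rearrangement, and your explicit handling of stationary additions is a nice touch, but the argument is not substantively different.
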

\begin{proof}
Choose $x,y \in S$. Since $\Gamma(S)$ is connected, we have at least one path between $x$ and $y$, $x=z_0 \sim z_1 \sim z_2 \sim \ldots \sim z_t=y$. Because $z_i \sim z_{i+1}$ for any $i$ implies that there exists $u \in S^*$ such that either $z_i + u = z_{i+1}$ or $z_{i+1} + u = z_{i}$, the idea of this proof is to separate additions arising from both of these two cases. So, choose any $i \in \{0,1,\ldots,t-2\}$ such that there exist integers $j,k,l \geq 0$ with $z_{i+j}=z_i+u$ where $u$ is a sum of $j$ units,
$z_{i+j+k}+v=z_{i+j}$ where $v$ is a sum of $k$ units and $z_{i+j+k+l}=z_{i+j+k}+w$ where $w$ is a sum of $l$ units. Notice that 
$z_{i+j+k+l}+v=z_{i+j+k}+v+w=z_{i+j}+w=z_i+u+w$. Applying this argument inductively, we see that there exist an integer $r \geq 0$ and a path 
$$x=z_0' \sim z_1' \sim z_2' \sim \ldots \sim z_r' \sim z_{r+1}' \sim \ldots \sim z_t'=y,$$ 
where $z_r'=x+\alpha$ and $y+\beta=z_r'$, where $\alpha$ and $\beta$ are either zero or sums of units.
Examine now both of the two cases separately.
\begin{enumerate}
\item
If $|S^*| = \infty$, there is nothing to prove. So, assume that $|S^*| < \infty$. Choose $u \in S^*$ and observe that $mu = nu$ and therefore $au \in \{u,2u,\ldots,(n-1)u\}$ for every positive integer $a$. This implies that every sum of units in $S$ can be written as a sum of at most $(n-1)|S^*|$ elements. By the above argument, this implies that $d(x,y) \leq 2(n-1)|S^*|$. 
\item
Since a sum of units is a unit, the arguments above imply that any path between $x, y \in S$ is of the form $x=z_0' \sim z_1' = x+\alpha \sim y$, where $y+\beta=z_1'$ for some $\alpha, \beta \in S^* \cup \{0\}$, and is thus of length at most $2$, therefore $d(x,y) \leq 2$.
\end{enumerate}
\end{proof}

\bigskip

\begin{Remark}
\label{maxlength}
In the case (1) of Theorem \ref{diamS}, we actually prove that for every $x, y \in S$ there exist paths from both $x$ and $y$  to $x+\gamma=y+\gamma$, where $\gamma=\sum\limits_{u \in S^*}{(n-1)u} \in S$.  
\end{Remark}

\bigskip

As the next example shows, the bounds from Theorem \ref{diamS} can be achieved in both cases.

\bigskip

\begin{Example}
\label{bounds}
Choose an integer $r \geq 1$. Let $S=N_r \cup (N_{r-1} + x)=\{0,1,\ldots,r,x,1+x,\ldots,(r-1)+x\}$, where $x+x=x^2=x$ and $r+a=r$ for every $a \in S$. It can be easily checked that $S$ is a semiring with $|S|=2r+1$, $S^*=\{1\}$ and $r \cdot 1 = (r+1) \cdot 1$. Note that $x+(r-1)+1=x+r=r$, so $r \sim x+(r-1)$ and therefeore $0 \sim 1 \sim 2 \sim \ldots \sim r-1 \sim r \sim x+(r-1) \sim x+(r-2) \sim \ldots \sim x+1 \sim x$ is a path of length $2r$. So, $\Gamma(S)$ is connected and Theorem \ref{diamS} states that $\diam{\Gamma(S)} \leq 2r$, but obviously $\diam{\Gamma(S)} = 2r$.  
If $r=1$ then $S=\{0,1,x\}$ is a semiring with $1+1=1$, so $S^*$ is closed under addition and $\diam{\Gamma(S)} = 2$.
\end{Example}

\bigskip

Since in general, the structure of the group of units in a semiring can be quite varied, it is difficult to examine the unitary Cayley graph of an arbitrary semiring. We therefore turn our attention to the matrix semirings, where at least in some instances, the group of units is well known. We prove the following theorem.

\bigskip

\begin{Theorem}
\label{diammatS}
Let $k \geq 2$ be an integer and let $S$ be a semiring such that $\Gamma(S)$ is a connected graph. Then
\begin{enumerate}
\item
If there exist positive integers $m < n$ such that $m \cdot 1 = n \cdot 1$ in $S$, then $\diam{\Gamma(M_k(S))} \leq 2k (n-1)|S^*|$. 
\item
If $S^*$ is closed under addition, then $\diam{\Gamma(M_k(S))} \leq 2k$.
\end{enumerate}
Moreover, if $S$ in an entire antiring, then $\diam{\Gamma(M_k(S))} \geq k \, \diam{\Gamma(S)} $.
\end{Theorem}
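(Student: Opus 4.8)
The plan is to prove the two upper bounds by lifting the arguments behind Theorem~\ref{diamS} from $S$ to $M_k(S)$, paying a factor $k$ that arises from splitting the $k^2$ matrix positions into $k$ cyclic classes, and to prove the lower bound by a projection argument onto a single row, using Theorem~\ref{invertible} to identify the units of $M_k(S)$. For the upper bounds, fix $A,B\in M_k(S)$ and let $P=P_\sigma$ be the permutation matrix of the $k$-cycle $i\mapsto i+1\pmod k$, so that $I=P^0,P^1,\dots,P^{k-1}$ are permutation matrices whose supports partition all positions, i.e. $\sum_{j=0}^{k-1}P^j=J$, the all-ones matrix. For any $u\in S^*$ the matrix $uP^j$ is monomial, hence a unit of $M_k(S)$, and adding it to a matrix increases by $u$ exactly the $k$ entries in the $j$-th cyclic class.

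In case (1), Remark~\ref{maxlength} provides $\gamma=\sum_{u\in S^*}(n-1)u$ with $s+\gamma=t+\gamma$ for all $s,t\in S$ (since $\Gamma(S)$ is connected). For each class $j$ and each $u\in S^*$ I add $uP^j$ exactly $(n-1)$ times; performing this for all $j$ and all $u$ is a path that adds $\gamma$ to every entry, so it joins $A$ to $A+\gamma J$ and has length $k(n-1)|S^*|$. The same scheme joins $B$ to $B+\gamma J$, and $(A+\gamma J)_{ab}=A_{ab}+\gamma=B_{ab}+\gamma=(B+\gamma J)_{ab}$, so the endpoints coincide and $d(A,B)\le 2k(n-1)|S^*|$. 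In case (2) I use instead that $S^*$ is closed under addition: for each $(a,b)$, Theorem~\ref{diamS}(2) gives $\alpha_{ab},\beta_{ab}\in S^*\cup\{0\}$ with $A_{ab}+\alpha_{ab}=B_{ab}+\beta_{ab}$, and after adding one fixed unit to both I may assume $\alpha_{ab},\beta_{ab}\in S^*$. Then each $U_j=\sum_{a}\alpha_{a,a+j}E_{a,a+j}$ is a monomial matrix (all its nonzero entries are units), so $A\sim A+U_0\sim\cdots\sim A+\sum_j U_j=:C$ is a path of length $k$ to the matrix $C$ with $C_{ab}=A_{ab}+\alpha_{ab}$; the analogous construction from $B$ using the $\beta_{ab}$ reaches the same $C$, whence $d(A,B)\le 2k$.

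For the lower bound, assume $S$ is an entire antiring. Entireness forbids an orthogonal decomposition of $1$ with two nonzero terms, so Theorem~\ref{invertible} forces every unit of $M_k(S)$ to be a monomial matrix $DP_\sigma$. The decisive consequence is that a single edge of $\Gamma(M_k(S))$ changes at most one entry in each row: if $M\sim M'$ then $M$ and $M'$ differ by a monomial matrix, whose row $a$ is nonzero only in column $\sigma(a)$. Now choose $x,y\in S$ with $d(x,y)=\diam{\Gamma(S)}=:D$, and let $X,Y\in M_k(S)$ agree outside the first row, whose rows are $(x,\dots,x)$ in $X$ and $(y,\dots,y)$ in $Y$. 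Along any path $X=M_0\sim\cdots\sim M_t=Y$ each step touches exactly one entry of the first row, and the successive values of a fixed entry $(1,b)$ are either equal or adjacent in $\Gamma(S)$; hence the steps touching $(1,b)$ trace a walk from $x$ to $y$ in $\Gamma(S)$, so there are at least $D$ of them. Summing over the $k$ columns, $t=\sum_{b=1}^{k}\#\{\text{steps touching }(1,b)\}\ge kD$, giving $\diam{\Gamma(M_k(S))}\ge d(X,Y)\ge k\,\diam{\Gamma(S)}$ (when $D=\infty$ one lets $D$ run over arbitrarily large finite distances).

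The main obstacle is the upper bound: one cannot simply apply Theorem~\ref{diamS} to the semiring $M_k(S)$, since its unit group is neither closed under addition nor comparable in size to $S^*$, so a naive application would replace $|S^*|$ by $|M_k(S)^*|$ and destroy the bound. The resolution is to confine every elementary move to the highly structured units $uP^j$ (respectively the monomial matrices $U_j$), whose $k$ support-classes tile the matrix; this simultaneously yields the correct factor $k$ and guarantees each move is a genuine unit. In the lower bound the symmetric difficulty—that a clever path might advance several coordinates simultaneously—is exactly what is precluded by placing the $k$ test entries in a single row rather than along the diagonal.
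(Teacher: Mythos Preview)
Your proof is correct and follows essentially the same route as the paper's: the upper bounds are obtained by adding scalar multiples (case (1)) or unit-diagonal multiples (case (2)) of the powers $P_\sigma^j$ of the cyclic permutation matrix, using that these supports tile the matrix, and the lower bound is obtained by placing copies of a maximally distant pair $x,y\in S$ along the first row and invoking Theorem~\ref{invertible} to see that each edge of $\Gamma(M_k(S))$ affects only one entry per row. Your treatment is in fact slightly more careful than the paper's in two places: you explicitly handle the possibility $\alpha_{ab}=0$ or $\beta_{ab}=0$ in case (2) by adding a fixed unit to both sides, and in the lower bound you spell out the counting argument (each step touches exactly one column in row~$1$, and the touches on column $b$ contain a walk of length at least $D$), where the paper simply asserts $d(A,B)\ge k\,d(a,b)$.
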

\begin{proof}
Let us firstly prove the moreover part. If $S$ is an entire antiring, then Theorem \ref{invertible} yields that every invertible matrix in $M_k(S)$ is of the form $DP$, where $D$ is an invertible diagonal matrix and $P$ is a permutation matrix. Now, choose $a \neq b \in S$. Denote $A=a(E_{11}+E_{12}+\ldots+E_{1k})$ and $B=b(E_{11}+E_{12}+\ldots+E_{1k})$. Since adding any unit in $M_k(S)$ to $A$ or $B$ changes exactly one of the elements in the first row (by adding a unit to that particular element), we conclude that $d(A,B) \geq k \, d(a,b)$. This implies that $\diam{\Gamma(M_k(S))} \geq k \, \diam{\Gamma(S)}$.

Now, choose an invertible diagonal matrix $D \in M_k(S)$ and a permutation matrix $P$. Observe that $DPP^TD^{-1}=I$, so $DP$ is a unit in $M_k(S)$.  Also, let $\sigma=(1 2 \ldots k) \in S_k$ and note that for any $i,l \in N_k$, we have $$\sigma^l(i)=\begin{cases}
     i+l, & \text{if }  i+l \leq k;\\
     i+l-k, & \text{otherwise}.
     \end{cases}$$
This implies that for every $i, j \in N_k$, there exists exactly one $l \in N_k$ such that $\sigma^l(i)=j$ and thus $P_\sigma + P_{\sigma^2} + \ldots + P_{\sigma^k}$ is a matrix with all elements equal to $1$. Now, choose $A, B \in M_k(S)$.
\begin{enumerate}
\item
We have $d(A_{ij}, B_{ij}) \leq 2 (n-1)|S^*|$ for every $i, j \in \{1,2, \ldots, k\}$ by Theorem \ref{diamS}. By Remark \ref{maxlength}, we know that there exist $s=(n-1)|S^*|$ (not necessarily distinct) units $u_1, u_2, \ldots, u_s$ in $S$ such that $A_{ij} \sim A_{ij} + u_1 \sim A_{ij} + u_1 + u_2 \sim \ldots \sim A_{ij} + \gamma = B_{ij} + \gamma \sim \ldots \sim B_{ij} + u_1 + u_2 \sim B_{ij} + u_1 \sim B_{ij}$ for every $i, j \in \{1,2, \ldots, k\}$.  This proves that there also exists a path 
$A \sim A + u_1P_\sigma \sim A + (u_1+u_2)P_\sigma \sim \ldots \sim A + \gamma P_\sigma \sim A + \gamma P_\sigma + u_1 P_{\sigma^2} \sim  A + \gamma P_\sigma + (u_1 + u_2)P_{\sigma^2} \sim \ldots \sim A + \gamma P_\sigma + \gamma P_{\sigma^2} \sim \ldots \sim A + \gamma (P_\sigma + P_{\sigma^2} + \ldots + P_{\sigma^k})=B + \gamma (P_\sigma + P_{\sigma^2} + \ldots + P_{\sigma^k}) \sim \ldots \sim B + \gamma P_{\sigma} + \gamma P_{\sigma^2} \sim \ldots \sim B + \gamma P_{\sigma} \sim \ldots \sim B$ of length at most $2k (n-1)|S^*|$, where
$\gamma=\sum\limits_{i=1}^s{u_i} \in S$. Therefore  $\diam{\Gamma(M_k(S))} \leq 2k (n-1)|S^*|$. 

\item
We have $d(A_{ij}, B_{ij}) \leq 2$ for every $i, j \in \{1,2, \ldots, k\}$ by Theorem \ref{diamS}. Since $S^*$ is closed under addition, this implies that for every $i, j \in \{1,2,\ldots,k\}$ there exist $u_{ij}, v_{ij} \in  S^*$ such that we have a path $A_{ij} \sim A_{ij} + u_{ij} = B_{ij} + v_{ij} \sim B_{ij}$ of length at most 2. 
Denote $D_i=\diag(u_{1\sigma^i(1)},u_{2\sigma^i(2)},\ldots, u_{k\sigma^i(k)})$ and $E_i=\diag(v_{1\sigma^i(1)},v_{2\sigma^i(2)},\ldots, v_{k\sigma^i(k)})$ for every $i=1,2,\ldots,k$. Since $D_i$ and $E_i$ are invertible matrices for every $i=1,2,\ldots,k$, we have a path
$A \sim A + D_1 P_{\sigma} \sim A + D_1 P_{\sigma} + D_2 P_{\sigma^2} \sim \ldots \sim A + D_1 P_{\sigma} + D_2 P_{\sigma^2} + \ldots + D_k P_{\sigma^k} = B + E_1 P_{\sigma} + E_2 P_{\sigma^2} + \ldots + E_k P_{\sigma^k} \sim \ldots \sim B + E_1 P_{\sigma} + E_2 P_{\sigma^2} \sim B + E_1 P_{\sigma} \sim B$.
This is a path between $A$ and $B$ in $\Gamma(M_k(S))$ of length at most $2k$, so  $\diam{\Gamma(M_k(S))} \leq 2k$.

\end{enumerate}
\end{proof}

\bigskip

Again, the upper bounds can be achieved in both cases as the next example shows.

\bigskip

\begin{Example}
\label{boundsmat}
Let $r \geq 1$ and let $S=N_r \cup (N_{r-1} + x)=\{0,1,\ldots,r,x,1+x,\ldots,(r-1)+x\}$, where $x+x=x^2=x$ and $r+a=r$ for every $a \in S$, as in Example \ref{bounds}.
It is clear that $S$ is an entire antinegative semiring. Example \ref{bounds} states that $\diam{\Gamma(S)} = 2r$ and Theorem \ref{diammatS} shows that $\diam{\Gamma(M_k(S))} = 2kr$.
If $r=1$ then $S^*$ is closed under addition and
$\diam{\Gamma(M_k(S))} = 2k$.
\end{Example}

\bigskip

Next, let us also examine the girth of the unitary Cayley graph of a matrix semiring. We have the following theorem.

\bigskip

\begin{Theorem}
\label{girth}
Let $S$ be a semiring and $k \geq 2$. Then $\girth{\Gamma(M_k(S))} \leq 4$. Moreover:
\begin{enumerate}
\item
If there exist $u, v \in S^*$ such that $u+v \in S^* \setminus \{u,v\}$ then $\girth{\Gamma(M_k(S))} = 3$;
\item
otherwise, if $S$ is an entire additively cancellative antiring, then $\girth{\Gamma(M_k(S))} = 4$.
\end{enumerate}
\end{Theorem}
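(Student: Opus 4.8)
The plan is to get $\girth{\Gamma(M_k(S))}\le 4$ from one explicit short cycle and then treat the two cases by producing a triangle or excluding one. For the general bound, fix a transposition $\tau\in S_k$ (possible since $k\ge 2$) and check that
$0\sim I\sim I+P_\tau\sim P_\tau\sim 0$ is a $4$-cycle: the four consecutive differences are $I,P_\tau,I,P_\tau$, all invertible, and the matrices $0,I,P_\tau,I+P_\tau$ are pairwise distinct by inspecting suitable diagonal and off-diagonal entries (using $1\ne 0$). For part (1), given $u,v\in S^*$ with $w:=u+v\in S^*\setminus\{u,v\}$, the matrices $0,uI,wI$ form a triangle: $uI$ and $wI$ are units (inverses $u^{-1}I$ and $w^{-1}I$), we have $0+uI=uI$, $uI+vI=wI$, $0+wI=wI$, and the three are distinct since $u\ne 0$, $w\ne 0$, $u\ne w$; as a simple graph has no cycle of length below $3$, this yields $\girth=3$.

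The substance is part (2). Assume the hypothesis of (1) fails and that $S$ is an entire additively cancellative antiring; I must rule out a triangle. Note first that $M_k(S)$ is again antinegative and additively cancellative and that $0$ is not a unit (as $1\ne 0$). Suppose $A\sim B\sim C\sim A$ were a triangle. I would orient each edge $\{x,y\}$ from $x$ to $y$ whenever $x+u=y$ for some $u\in S^*$; this is well defined, since $x+u=y$ and $y+u'=x$ together give $x+(u+u')=x$, hence $u+u'=0$ by additive cancellation and then $u=u'=0$ by zero-sum-freeness, which is impossible. If the resulting orientation is a directed $3$-cycle, composing the three relations gives $A=A+(U_1+U_2+U_3)$ with all $U_i\in M_k(S)^*$, so $U_1+U_2+U_3=0$ and hence each $U_i=0$, a contradiction. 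Otherwise the oriented triangle has a source; taking it to be $A$ and splitting into the two possible orientations of $\{B,C\}$, additive cancellation yields in each case three units in $M_k(S)^*$ one of which is the sum of the other two; relabelling, call them $U,V,W$ with $W=U+V$.

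Now I would invoke the structure of units: since $S$ is an entire antiring, Theorem \ref{invertible} forces the orthogonal decomposition of $1$ to be trivial, so every invertible matrix over $S$ has the form $DP_\sigma$ with $D$ an invertible diagonal matrix and $\sigma\in S_k$ (exactly as in the proof of Theorem \ref{diammatS}). Write $U=D_1P_{\sigma_1}$, $V=D_2P_{\sigma_2}$, $W=D_3P_{\sigma_3}$ with $D_1=\diag(a_1,\ldots,a_k)$, $D_2=\diag(b_1,\ldots,b_k)$, all $a_i,b_i\in S^*$. Row $i$ of $U$ has its only nonzero entry $a_i$ in column $\sigma_1(i)$ and row $i$ of $V$ has its only nonzero entry $b_i$ in column $\sigma_2(i)$, so if $\sigma_1(i)\ne\sigma_2(i)$ then row $i$ of $W=U+V$ carries two nonzero entries, contradicting that $W=D_3P_{\sigma_3}$ is monomial. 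Hence $\sigma_1=\sigma_2$, so $W=(D_1+D_2)P_{\sigma_1}$ with all diagonal entries $a_i+b_i\ne 0$ by zero-sum-freeness, forcing $\sigma_3=\sigma_1$ and $D_3=D_1+D_2$. Since $W$ is invertible, $D_3$ is an invertible diagonal matrix, so $a_i+b_i\in S^*$ for every $i$; moreover $a_i+b_i\ne a_i$ and $a_i+b_i\ne b_i$ by additive cancellation (e.g.\ $a_i+b_i=a_i+0$ would force $b_i=0$). Thus $a_1,b_1\in S^*$ and $a_1+b_1\in S^*\setminus\{a_1,b_1\}$, contradicting that the hypothesis of (1) fails. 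Hence $\Gamma(M_k(S))$ is triangle-free, and with $\girth\le 4$ this gives $\girth{\Gamma(M_k(S))}=4$.

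The main obstacle I anticipate is the bookkeeping in part (2): getting the orientation and source analysis of a hypothetical triangle right so that it reduces cleanly to an equation $W=U+V$ among units, and then carrying out the row-support comparison that, via Theorem \ref{invertible}, collapses $\sigma_1,\sigma_2,\sigma_3$ to a common permutation and turns the matrix identity into the scalar statement $a_i+b_i\in S^*\setminus\{a_i,b_i\}$. The two explicit constructions (the $4$-cycle and the triangle in part (1)) should be routine verifications.
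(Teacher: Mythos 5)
Your proof is correct and follows essentially the same route as the paper: the same $4$-cycle $0\sim I\sim I+P_\tau\sim P_\tau\sim 0$, the same triangle $0,uI,wI$ for part (1), and for part (2) the same reduction via Theorem \ref{invertible} of a hypothetical triangle to an equation $W=U+V$ among monomial units, which collapses to a sum of two units of $S$ being a unit distinct from both, contradicting the case hypothesis together with additive cancellativity. Your orientation/source analysis just makes explicit the case distinctions (including the ``directed $3$-cycle'' case handled by zero-sum-freeness) that the paper compresses into ``without loss of generality'' and ``the reasoning is similar.''
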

\begin{proof}
Denote by $P \neq I$ a permutation matrix in $M_k(S)$. Since $P$ is invertible, $P \sim 0 \sim I \sim I+P \sim P$ is a $4$-cycle in $M_k(S)$, so $\girth{\Gamma(M_k(S))} \leq 4$. If $u+v=w$ for some units $u,v, w \in S$ with $w \neq u,v$, then matrices $0, uI$ and $wI$ form a $3$-cycle in $M_k(S)$, so $\girth{\Gamma(M_k(S))} = 3$.

So, suppose now that $S$ is an entire additively cancellative antiring. Choose $u,v \in S^*$. If $u+v \in S^*$ then by our assumption $u+v=u$ or $u+v=v$. But since $S$ is additively cancellative, this is a contradiction. So, we have proved that the sum of two units is not a unit in $S$. Choose $A, U \in M_n(S)$ such that $U$ is invertible (so $A$ and $A+U$ are neighbours in the graph). Suppose that there exists $B \in M_k(S)$ such that $A, A+U$ and $B$ form a $3$-cycle. We have two possibilites: there exists an invertible $W \in M_k(S)$ such that either $B=A+W$ or $A=B+W$. Assume firstly that $B=A+W$. Since $A+U$ and $B$ are neighbours, there exists an invertible $V \in M_k(S)$ such that $A+W+V=A+U$ or $A+U+V=A+W$. Assume without loss of generality that $A+W+V=A+U$. Since $S$ is additively cancellative, we have $W+V=U$. By Theorem \ref{invertible} we know that there exist permutations $\sigma, \rho, \tau \in S_k$ and invertible diagonal matrices $D_1, D_2$ and $D_3$ that such that $W=D_1 P_\sigma$, $V=D_2 P_\rho$ and $U=D_3 P_\tau$. However, we can reason now that $\sigma=\rho=\tau$, so $W+V=(D_1+D_2)P_\sigma=U$. Since both $D_1$ and $D_2$ have units along their diagonals and we have previously proved that a sum of two units is not a unit in $S$, we see that $D_1+D_2$ is not an invertible matrix, which now yields a contradiction. In the case $A=B+W$, the reasoning is similar.
\end{proof}

\bigskip

Note that in case $S$ is not additively cancellative, it can happen that $\girth{\Gamma(M_k(S))} = 3$ (even if the semiring does not even contain two distinct units), as the following example shows.

\bigskip

\begin{Example}
\label{girth3}
Let, $A=\left[
 \begin{matrix}
 0 & 1 & 1 \\
 1 & 1 & 0 \\
 1 & 1 & 1 
 \end{matrix}
 \right]$ and $U=\left[
 \begin{matrix}
 1 & 0 & 0 \\
 0 & 0 & 1 \\
 0 & 1 & 0 
 \end{matrix}
 \right]$ in $M_3(\BB)$. Observe that $U$ is a unit in $M_3(\BB)$, so $A \sim A + I  \sim A + I + U = A + U \sim A$ is a 3-cycle in $\Gamma(M_3(\BB))$.
This shows that $\girth{\Gamma(M_3(\BB))} = 3$ and similarly we can show that $\girth{\Gamma(M_k(\BB))} = 3$ for all $k \geq 3$. Note however that a simple verification yields $\girth{\Gamma(M_2(\BB))} = 4$.
\end{Example}

\bigskip

A related notion to the notion of girth is the notion of the clique number. The next theorem describes the clique number of the unitary Cayley graph of a matrix semiring. Specifically, we have the following.

\bigskip

\begin{Theorem}
\label{cn}
Let $S$ be a semiring and $k \geq 2$. Then $\omega(\Gamma(M_k(S))) \geq \omega(\Gamma(S))$.
Moreover, if $S$ is an entire additively cancellative antiring, then $\omega(\Gamma(M_k(S)))=\omega(\Gamma(S))$. 
\end{Theorem}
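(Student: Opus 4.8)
The easy inequality $\omega(\Gamma(M_k(S)))\ge\omega(\Gamma(S))$ holds for an arbitrary semiring $S$, and I would prove it by a diagonal embedding: if $\{x_1,\ldots,x_r\}$ is a maximum clique in $\Gamma(S)$, then $\{x_1I,x_2I,\ldots,x_rI\}$ is a clique of the same size in $\Gamma(M_k(S))$. This map is injective (compare $(1,1)$-entries), and if $x_i+u=x_j$ with $u\in S^*$ then $x_iI+uI=x_jI$ with $uI\in M_k(S)^*$, so adjacency is preserved.

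For the reverse inequality assume $S$ is an entire additively cancellative antiring, so that Theorem~\ref{invertible} applies, and first record the facts I need. Since $S$ is entire, the only orthogonal decomposition of $1$ in $S$ is $\{1\}$, so by Theorem~\ref{invertible} the units of $M_k(S)$ are precisely the monomial matrices $DP_\sigma$ with $D$ an invertible diagonal matrix and $\sigma\in S_k$; the diagonal entries of such a $D$ are units of $S$, hence nonzero (for $S$ nontrivial). Also, in an antiring $a\ne 0$ forces $a+b\ne 0$ for every $b$, and $M_k(S)$ is again an additively cancellative antiring. The technical heart is the observation that if $DP_\sigma=D_1P_{\sigma_1}+D_2P_{\sigma_2}$ with all three matrices of the monomial form above, then $\sigma=\sigma_1=\sigma_2$: indeed, in row $p$ the $(p,\sigma_1(p))$-entry of the right-hand side equals $(D_1)_{pp}$ plus possibly $(D_2)_{pp}$, which is nonzero because $(D_1)_{pp}\ne 0$ and $S$ is an antiring; since the left-hand side has its only nonzero entry in row $p$ in column $\sigma(p)$, this forces $\sigma_1(p)=\sigma(p)$, and symmetrically $\sigma_2(p)=\sigma(p)$.

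Now let $\{A_1,\ldots,A_m\}$ be a maximum clique in $\Gamma(M_k(S))$. Because $M_k(S)$ is an additively cancellative antiring, for each pair $i\ne j$ exactly one of $A_i+U=A_j$, $A_j+U=A_i$ can hold, with a unique $U\in M_k(S)^*$; orienting each edge accordingly turns the clique into a tournament, and since every tournament has a Hamiltonian directed path (R\'edei) I may relabel the vertices so that $A_{i+1}=A_i+U_i$ with $U_i\in M_k(S)^*$ for all $i$. Telescoping gives $A_i+V_{ij}=A_j$ for $i<j$, where $V_{ij}:=U_i+U_{i+1}+\cdots+U_{j-1}$; moreover each $V_{ij}$ is a unit, because the edge $A_i\sim A_j$ yields a unit $U$ with $A_i+U=A_j$ (so $U=V_{ij}$ by cancellation) or with $A_j+U=A_i$ (which would give $U+V_{ij}=0$, impossible in an antiring). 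Setting $C_1:=0$ and $C_j:=V_{1j}$ for $j\ge 2$, each $C_j$ is a unit, $C_i\ne C_j$, and $C_j=C_i+V_{ij}$, so $\{C_1,\ldots,C_m\}$ is again a clique of size $m$, this time consisting of $0$ together with units of $M_k(S)$.

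Finally, applying the monomial observation to the identities $C_j=C_i+V_{ij}$ shows that $C_2,\ldots,C_m$ all share a single underlying permutation $\sigma$. Fix any $p\in N_k$ and put $x_j:=(C_j)_{p,\sigma(p)}$. Then $x_1=0$, each $x_j$ with $j\ge 2$ is a unit of $S$ (the $(p,p)$-entry of the diagonal part of $C_j$), and $x_j=x_i+(V_{ij})_{p,\sigma(p)}$ with $(V_{ij})_{p,\sigma(p)}\in S^*$; hence the $x_j$ are pairwise distinct (cancellation in $S$) and pairwise adjacent in $\Gamma(S)$. Thus $\{x_1,\ldots,x_m\}$ is a clique of size $m$ in $\Gamma(S)$, giving $\omega(\Gamma(S))\ge m=\omega(\Gamma(M_k(S)))$, and with the first paragraph this yields equality. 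The step I expect to be the real obstacle is the passage to a clique through $0$: one cannot translate a clique "by $-A_1$" as in the ring case, so $\{0,C_2,\ldots,C_m\}$ must be manufactured by telescoping along the Hamiltonian path, and one must keep in mind that $i<j$ in the relabelling does \emph{not} make the tournament edge point from $i$ to $j$ — it is precisely here that antinegativity is used, to rule out the wrong orientation.
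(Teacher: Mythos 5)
Your proof is correct, and it follows the same overall skeleton as the paper's — both reduce a maximum clique of $M_k(S)$ to a linearly ordered chain $A,\,A+W_1,\,A+W_1+W_2,\dots$ whose increments and partial sums are monomial units sharing one permutation $\sigma$ (using Theorem \ref{invertible} plus the antinegativity argument that a monomial sum of monomial matrices forces equal permutations), and then project onto a single matrix entry to obtain a clique in $\Gamma(S)$. The genuine difference is in how the linear ordering is produced. The paper does this by an explicit induction on the clique size, inserting the new vertex $A_r$ into the correct position of the already-built chain and re-indexing the diagonal increments; this is the longest and most delicate part of its argument. You instead orient the clique as a tournament (using additive cancellativity and antinegativity to show exactly one direction $A_i+U=A_j$ or $A_j+U=A_i$ can occur) and invoke R\'edei's theorem to get a Hamiltonian directed path, after which telescoping immediately yields the chain and the unithood of all partial sums $V_{ij}$. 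This is cleaner and outsources the combinatorial work to a standard theorem, at the cost of importing that theorem. Your additional normalization to a clique $\{0,C_2,\dots,C_m\}$ through $0$ is harmless but not needed — one can project the chain $A_i=A_1+V_{1i}$ directly onto the entry $(p,\sigma(p))$, which is essentially what the paper does (modulo its slip of writing entry $(1,1)$ where $(1,\sigma(1))$ is meant). Both proofs implicitly assume $S$ is nontrivial so that units are nonzero; the trivial case is immediate.
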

\begin{proof}
If $\{w_1,w_2,\ldots,w_r\}$ is a clique in $\Gamma(S)$ then $\{w_1I,w_2I,\ldots,w_rI\}$ is a clique in $\Gamma(M_k(S))$, thus $\omega(\Gamma(M_k(S))) \geq \omega(\Gamma(S))$. 

So, suppose now that $S$ is an entire additively cancellative antiring and the set $W=\{A_1, A_2, \ldots, A_r\}$ is a maximal clique in $\Gamma(M_k(S))$. 
Let us prove that there exist a matrix $A \in M_k(S)$, a permutation matrix $P \in M_k(S)$ and invertible diagonal matrices $D_1,D_2, \ldots, D_{r-1}$ such that $D_i+D_{i+1}+\ldots+D_j$ is an invertible diagonal matrix for all $1 \leq i < j \leq r-1$ and that $W=\{A,A+D_1P,A+(D_1+D_2)P, \ldots, A+(D_1+D_2+\ldots+D_{r-1})P\}$. If $r=1$ there is nothing to prove. Suppose that $r=2$. There exists a unit $U$ in $M_k(S)$ such that $A_1=A_2+U$ or $A_2=A_1+U$. In both cases, there exists a matrix $A \in M_k(S)$ such that $W=\{A,A+U\}$. By Theorem \ref{invertible}, $U=DP$ where $D$ is an invertible diagonal matrix and $P$ is a permutation matrix, so the assertion holds. Now, assume that $r \geq 3$ and let us proceed with induction on $r$. Since $\{A_1, A_2, \ldots, A_{r-1}\}$ is a clique in $\Gamma(M_k(S))$, we have by the induction hypothesis that there exist a matrix $B \in M_k(S)$, a permutation matrix $P \in M_k(S)$ and invertible diagonal matrices $D_1,D_2, \ldots, D_{r-2}$ such that $D_i+D_{i+1}+\ldots+D_j$ is an invertible diagonal matrix for all $1 \leq i < j \leq r-2$ and $W=\{B,B+D_1P,B+(D_1+D_2)P, \ldots, B+(D_1+D_2+\ldots+D_{r-2})P, A_r\}$. Now, $A_r$ is connected to every other vertex in $W$, so there may exist some numbers $t \in \{1,2,\ldots,r-1\}$ such that $A_r+U=B+(D_1+D_2+\ldots+D_{t-1})P$ for some invertible matrix $U \in M_k(S)$. Now, if any such numbers $t$ do exist, choose the smallest one among them. On the other hand, if there exists no such $t$, define $t=r-1$. 

Assume firstly that $t=1$. So, $B=A_r+U$. Again, $U=D_0Q$ for some invertible diagonal matrix $D_0 \in M_k(S)$ and some permutation matrix $Q \in M_k(S)$ by Theorem \ref{invertible}. Since $r \geq 3$, there exists an edge between $A_r$ and $B+D_1P=A_r+D_0Q+D_1P$, so by Theorem \ref{invertible} there exist an invertible diagonal matrix $D$ and a permutation matrix $R$ such that either $A_r+DR=A_r+D_0Q+D_1P$ or $A_r+D_0Q+D_1P+DR=A_r$. Since $S$ is additively cancellative and antinegative, the latter is not possible. Thus, $A_r+DR=A_r+D_0Q+D_1P$, so $DR=D_0Q+D_1P$, since $S$ is additively cancellative. But $R$ is a permutation matrix, so the fact that $S$ is antinegative now implies that $P=Q=R$. Thus, we have proved that $\{A_r, A_r + D_0P, A_r+(D_0+D_1)P, \ldots, A_r+(D_0+D_1+\ldots+D_{r-2})P\}$ is a clique in $\Gamma(M_k(S))$.
Assume finally that $t \geq 2$. This implies that 
$B+(D_1+D_2+\ldots+D_{t-2})P+V=A_r$ for some invertible matrix $V \in M_k(S)$. By Theorem \ref{invertible}, we can again conclude that $V=D_0P$ for some invertible diagonal matrix $D_0$. Denote $E_i=D_i$ for $i=1,2,\ldots,t-2$, $E_{t-1}=D_0$ and $E_i=D_{i-1}$ for $i=t,t+1,\ldots,r-1$. Observe that
 $\{B, B + E_1P, B+(E_1+E_2)P, \ldots, B+(E_1+E_2+\ldots+E_{r-1})P\}$ is a clique in $\Gamma(M_k(S))$.

Thus, we have proven that there exist $A \in M_k(S)$ and a permutation matrix $P \in M_k(S)$ such that $W=\{A,A+D_1P,A+(D_1+D_2)P, \ldots, A+(D_1+D_2+\ldots+D_{r-1})P\}$ is a maximal clique  in $\Gamma(M_k(S))$, where 
$D_i+D_{i+1}+\ldots+D_j$ is an invertible diagonal matrix for all $1 \leq i < j \leq r-1$. Now, define $w_i$ as the element at entry $(1,1)$ of matrix
$A+(D_1+D_2+\ldots+D_{i-1})P$ for every $i=1,2,\ldots,r$.  Since $S$ is additively cancellative and antinegative, we can conclude that $w_i \neq w_j$ for all $i \neq j \in \{1,2,\ldots,r \}$. Also, for every $1 \leq i < j \leq r$, we have $w_j=w_i+f_{i,j}$, where $f_{i,j}$ is the entry at position $(1,1)$ of matrix $D_{i+1}+D_{i+2}+\ldots+D_j$, which is an invertible diagonal matrix. Therefore $f_{i,j}$ is invertible in $S$ for every $1 \leq i < j \leq r$. This implies that $\{w_1,w_2, \ldots, w_r\}$ is a clique in $\Gamma(S)$ and thus $\omega(\Gamma(M_k(S))) \leq \omega(\Gamma(S))$, finally yielding $\omega(\Gamma(M_k(S)))=\omega(\Gamma(S))$.
\end{proof}

\bigskip

\begin{Remark}
Note that Example \ref{girth3} shows also that $\omega(\Gamma(M_k(\BB))) \geq 3$ for $k \geq 3$, while obviously $\omega(\Gamma(\BB))=2$. So, in general the clique number of a unitary Cayley graph of a matrix semiring can be larger than the clique number of the unitary Cayley graph of the underlying semiring.
\end{Remark}

\bigskip

Finally, let us examine the independence number of the unitary Cayley graph of a matrix semiring. Note that the cardinality of set $W_0$ in the proof of the next theorem can be deduced from \cite[Proposition 3.5(1)]{dolzanperm}, but we nonetheless include the proof here for the sake of completeness.

\bigskip

\begin{Theorem}
\label{independence}
Let $S$ be a finite entire antiring and $k \geq 2$. Then $\alpha(\Gamma(M_k(S))) \geq  |S|^{k^2}-\sum_{i=0}^k{{k \choose i}(-1)^i(|S|^{k-i}-1)^k} + \frac{1}{2}\left(\left(1+|S^*|^{2k}\right)^k+\left(1-|S^*|^{2k}\right)^k\right)-1$.
Moreover, if $S$ is an entire additively cancellative antiring, then $\alpha(\Gamma(M_k(S))) \leq |S|^{k^2} - k! |S^*|^k$. 
\end{Theorem}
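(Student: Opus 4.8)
The plan is to establish the two bounds separately, the lower bound by constructing a large independent set explicitly and counting it via inclusion–exclusion, and the upper bound by bounding the size of a maximum clique cover (equivalently, exhibiting many vertex-disjoint cliques).

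For the lower bound, the key observation is that adding a unit to a matrix $A \in M_k(S)$ — which by Theorem \ref{invertible} has the form $DP_\sigma$ with $D$ invertible diagonal and $\sigma \in S_k$ — changes $A$ in exactly one entry of each row and each column, namely at positions $(i,\sigma(i))$; since $S$ is an antiring the new entry $A_{i\sigma(i)}+D_{ii}$ is never zero. I would therefore take the independent set $W$ to consist of: (i) the set $W_0$ of matrices having at least one zero entry in \emph{every} row and at least one zero entry in \emph{every} column — since any unit destroys one zero per row (or, dually, one zero per column), and two matrices in $W_0$ can only be adjacent via a \emph{single} unit, such an addition cannot carry a matrix of $W_0$ back into $W_0$ unless... here one has to be careful, because a matrix could have two zeros in some row; the cleaner invariant is ``has a zero in each row'' OR ``has a zero in each column'', and one checks that the symmetric-difference pattern of a unit (a full transversal $\{(i,\sigma(i))\}$) forces that no addition of a single unit preserves membership. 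The count $|W_0|$ is obtained by inclusion–exclusion on which rows fail to contain a zero: the number of $k\times k$ matrices over $S$ with no zero in a prescribed set of $k-i$ rows and arbitrary elsewhere involves $(|S|^{k-i}-1)^k$ after excluding the forbidden rows column-by-column, yielding $\sum_{i=0}^k \binom{k}{i}(-1)^i(|S|^{k-i}-1)^k$ as the number of ``bad'' matrices (this is the term being subtracted, and matches the cited \cite[Proposition 3.5(1)]{dolzanperm}); then one adds the contribution of a further family of matrices with \emph{no} zero entries that still form an independent set, counted by the expression $\frac12((1+|S^*|^{2k})^k + (1-|S^*|^{2k})^k)$ — this looks like a parity/binomial-sieve count of matrices whose every row is a unit-vector-free configuration paired up so that differences by units are excluded, summing $\binom{k}{j}|S^*|^{2k\cdot(\text{something})}$ over even $j$. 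The $-1$ removes the overlap (the all-... matrix counted twice, or the zero matrix). I would present $W_0$ carefully, verify independence, do the inclusion–exclusion, and then describe the auxiliary family and its count.

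For the upper bound, the strategy is dual: exhibit $k!\,|S^*|^k$ pairwise disjoint cliques, so that any independent set meets each in at most one vertex, giving $\alpha \le |S|^{k^2} - (k!|S^*|^k - 1)\cdot 1 \cdot(\text{number of cliques})$... more precisely, if the vertex set is partitioned into $N$ cliques each of size $\ge 2$ plus leftover vertices, then $\alpha \le |S|^{k^2} - N$. I would produce $N = k!\,|S^*|^k$ edges (cliques of size $2$) that are vertex-disjoint: for instance, to each pair $(\sigma, D)$ with $\sigma \in S_k$ and $D = \diag(d_1,\dots,d_k)$, $d_i \in S^*$, associate the unit $U_{\sigma,D}=DP_\sigma$ and use it to match up matrices; one needs the matching to be well-defined and disjoint, which is where additive cancellativity enters — it guarantees that $A \mapsto A + U$ is injective and that distinct units give edge-disjoint (indeed vertex-disjoint after a suitable bookkeeping) pairings. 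Concretely I would fix, for each unit $U$, a system of representatives and pair $A$ with $A+U$; additive cancellativity ensures $A\ne A+U$ and that the pairing is consistent, and counting the units $DP_\sigma$ gives exactly $k!\,|S^*|^k$ of them, each contributing one vertex that must be excluded from any independent set, hence $\alpha(\Gamma(M_k(S))) \le |S|^{k^2} - k!\,|S^*|^k$.

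**Main obstacle.** The genuinely delicate part is the lower bound: both verifying that the constructed set is independent (the argument that adding a single unit — a full transversal of nonzero perturbations — cannot map a ``zero-in-every-row-or-every-column'' matrix back into the family, and simultaneously handling the extra no-zero family) and extracting the precise closed-form count $\frac12((1+|S^*|^{2k})^k+(1-|S^*|^{2k})^k) - 1$ via the right inclusion–exclusion / parity sieve. The upper bound is comparatively routine once one commits to the ``disjoint pairings indexed by units $DP_\sigma$'' idea and invokes additive cancellativity for injectivity; the bookkeeping to make the $k!\,|S^*|^k$ pairings genuinely vertex-disjoint rather than merely edge-disjoint is the only subtlety there.
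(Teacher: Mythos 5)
The proposal has genuine gaps, chiefly in the lower bound. First, your independent set $W_0$ is not the right one: neither of the two versions you float is actually independent. Take $k=2$ and $S=\BB$: the zero matrix and the identity are adjacent ($0+I=I$ with $I$ a unit), yet for $k\geq 2$ both have a zero entry in every row and in every column, so both your ``AND'' version and your ``has a zero in each row OR in each column'' version contain adjacent pairs. The set that works is the set of matrices with at least one \emph{entirely zero} row or \emph{entirely zero} column: adding a unit $DP_\sigma$ places a nonzero entry into every row and every column (by antinegativity the perturbed entries cannot vanish), so the result can never again have a wholly zero row or column, and independence is immediate; it is also exactly the set whose complement is counted by $\sum_{i=0}^k\binom{k}{i}(-1)^i(|S|^{k-i}-1)^k$ (matrices with all rows nonzero, sieved by inclusion--exclusion over the number of zero columns), not the row-by-row count you describe. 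Second, the term $\frac{1}{2}\bigl(\bigl(1+|S^*|^{2k}\bigr)^k+\bigl(1-|S^*|^{2k}\bigr)^k\bigr)-1$ does not arise from a parity sieve over matrices with no zero entries; it is the even part of a binomial expansion counting explicit further families $W_i$ consisting of sums $D_1P_1+\cdots+D_{2i}P_{2i}$ of $2i$ generalized permutation matrices with pairwise disjoint supports ($P_{j_1}(l)\neq P_{j_2}(l)$ for all $l$ and all $j_1\neq j_2$), with $|W_i|\geq |S^*|^{2ik}\binom{k}{2i}$, the union $W_0\cup W_1\cup\cdots\cup W_{\lfloor k/2\rfloor}$ being independent again by counting nonzero entries per row using antinegativity. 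Without these two constructions the lower bound is not established.

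For the upper bound, your key observation --- that each vertex $A$ has the $k!\,|S^*|^k$ pairwise distinct neighbours $A+U$, $U$ a unit, by Theorem \ref{invertible} together with additive cancellativity --- is correct, but the route through a vertex-disjoint matching is both incomplete and unnecessary. You concede you cannot make the $k!\,|S^*|^k$ pairings vertex-disjoint, and that is a real obstruction: the edges $\{A,A+U\}$ for varying $U$ all share the vertex $A$, so you would need a nontrivial matching-existence argument to extract $k!\,|S^*|^k$ disjoint edges. The paper avoids this entirely: the minimum degree is $d=k!\,|S^*|^k$ (attained at $0$, since by antinegativity $0$ is adjacent precisely to the units), and any vertex $v$ of an independent set $Z$ has its $\geq d$ neighbours outside $Z$, whence $\alpha(\Gamma(M_k(S)))+d\leq |S|^{k^2}$ directly.
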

\begin{proof}
Let $W_0$ denote the set of all matrices in $M_k(S)$ that have at least one zero row or column. Choose any $A \neq B \in W_0$. Since by Theorem \ref{invertible}, every invertible matrix in $M_k(S)$ is of the form $DP$, where $D$ is an invertible diagonal matrix and $P$ is a permutation matrix, and $S$ is antinegative, there cannot exist an edge between $A$ and $B$. Note that $|W_0| =|S|^{k^2}-|W_0'|$, where $W_0'$ is the set of all matrices with no zero rows or columns.  
Observe that there are $(|S|^{k}-1)^k$ matrices that have all rows nonzero. Now, some of them of course may have some zero columns. Suppose therefore that we have at least $i$ zero columns for some $i \in \{1,2,\ldots,k\}$. We have $k \choose i$ possible ways to choose the $i$ columns. But if we disregard the zero columns, there are $|S|^{k-i}-1$ possible ways to choose the remaining elements in every (nonzero) row.  Since there are $k$ rows, this yields $(|S|^{k-i}-1)^k$ matrices. Now, in this way we may have counted some matrices (with more than $i$ zero columns) multiple times, but the inclusion exclusion principle then yields that $|W_0'|=\sum_{i=0}^k{{k \choose i}(-1)^i(|S|^{k-i}-1)^k}$, therefore $|W_0| = |S|^{k^2}-\sum_{i=0}^k{{k \choose i}(-1)^i(|S|^{k-i}-1)^k}$.

Now, let $W_1$ denote the set of all matrices in $M_k(S)$ of the form $D_1P_1+D_2P_2$, where $D_1$ and $D_2$ are invertible diagonal matrices and $P_1$ and $P_2$ are permutation matrices such that $P_1(i) \neq P_2(i)$ for every $1 \leq i \leq k$. Again by the antinegativity of $S$, $W_0 \cup W_1$ is an independent set. Since there exist at least $k$ distinct permutations in $M_k(S)$ that satisfy the above criterion, we can easily check that $|W_1| \geq |S^*|^{2k}{k \choose 2}$. We can continue this process, by choosing $W_2$ as the set of all matrices of the form $D_1P_1+D_2P_2+D_3P_3+D_4P_4$, where $D_1,\ldots,D_4$ are invertible diagonal matrices and $P_1,\ldots,P_4$ are permutation matrices such that $P_{j_1}(i) \neq P_{j_2}(i)$ for every $1 \leq i \leq k$ and every $1 \leq j_1 \neq j_2 \leq 4$.
Observe that $W_0 \cup W_1 \cup W_2$ is still an independent set with $|W_2| \geq |S^*|^{4k}{k \choose 4}$. We can continue this process until we construct $W_{\lfloor k/2 \rfloor}$ and arrive at the independent set $W=W_0 \cup W_1 \cup \ldots \cup W_{\lfloor k/2 \rfloor}$. Note that this is a disjoint union and that $|W_1 \cup W_2 \cup \ldots \cup W_{\lfloor k/2 \rfloor}| \geq \sum\limits_{i=1}^r{|S^*|^{2ik}{k \choose 2i}}$, where $r=\lfloor k/2 \rfloor$.
But observe that $\sum\limits_{i=1}^r{|S^*|^{2ik}{k \choose 2i}}=\frac{1}{2}\left(\left(1+|S^*|^{2k}\right)^k+\left(1-|S^*|^{2k}\right)^k\right)-1$, finally yielding $\alpha(\Gamma(M_k(S))) \geq |W| \geq |S|^{k^2}-\sum_{i=0}^k{{k \choose i}(-1)^i(|S|^{k-i}-1)^k} + \frac{1}{2}\left(\left(1+|S^*|^{2k}\right)^k+\left(1-|S^*|^{2k}\right)^k\right)-1$.

Suppose now that $S$ is an additively cancellative antiring. Let $d$ denote the minimal degree of any vertex in $\Gamma(M_k(S))$. Note that for every $A \in M_k(S)$ and every invertible $U \in M_k(S)$, there is an edge between $A$ and $A+U$. Since $S$ is additively cancellative $A+U \neq A+V$ for all invertible matrices $U \neq V$, implying that $d \geq k! |S^*|^k$ by Theorem \ref{invertible}. Since $S$ is antinegative, $0 \in M_k(S)$ is a neighbour to exactly all the invertible matrices, thus $d = k! |S^*|^k$. Now, suppose $Z$ is a maximal independent set in $\Gamma(M_k(S))$. For any $v \in Z$, we know that $v$ has at least $d$ neighbours, but obviously none of them are in $Z$. Thus, $\alpha(\Gamma(M_k(S))) + d \leq |M_k(S)|=|S|^{k^2}$, thus proving our assertion.
\end{proof}

\bigskip

\begin{Example}
Let us consider the graph $\Gamma(M_2(\BB))$. Since the only units in $M_2(\BB)$ are $\left[
 \begin{matrix}
 1 & 0 \\
 0 & 1  
 \end{matrix}
 \right]$ and $\left[
 \begin{matrix}
 0 & 1 \\
 1 & 0  
 \end{matrix}
 \right]$, 
 we see that 
 $$W=\left\{ \left[
 \begin{matrix}
 0 & 0 \\
 0 & 0  
 \end{matrix}
 \right], \left[
 \begin{matrix}
 1 & 0 \\
 0 & 0  
 \end{matrix}
 \right], \left[
 \begin{matrix}
 0 & 1 \\
 0 & 0  
 \end{matrix}
 \right], \left[
 \begin{matrix}
 0 & 0 \\
 1 & 0  
 \end{matrix}
 \right], \left[
 \begin{matrix}
 0 & 0 \\
 0 & 1  
 \end{matrix}
 \right], \left[
 \begin{matrix}
 1 & 1 \\
 0 & 0  
 \end{matrix}
 \right], \left[
 \begin{matrix}
 0 & 0 \\
 1 & 1  
 \end{matrix}
 \right], \left[
 \begin{matrix}
 1 & 0 \\
 1 & 0  
 \end{matrix}
 \right], \left[
 \begin{matrix}
 0 & 1 \\
 0 & 1  
 \end{matrix}
 \right], \left[
 \begin{matrix}
 1 & 1 \\
 1 & 1  
 \end{matrix}
 \right]\right\}$$ is an independent set. Thus $\alpha((\Gamma(M_2(\BB))) \geq 10$. Since there is an edge from $\left[
 \begin{matrix}
 1 & 1 \\
 1 & 1  
 \end{matrix}
 \right]$ to all vertices in the complement of $W$, we see that any independent set that contains $v=\left[
 \begin{matrix}
 1 & 1 \\
 1 & 1  
 \end{matrix}
 \right]$ is of cardinality at most $10$. So, suppose we delete $v$ from $\Gamma(M_2(\BB))$. We are left with a graph $\Gamma'$ with $n'=15$ vertices and $e'=18$ edges, where the maximal degree of a vertex is $\Delta'=3$. By a well known bound (which is credited to Kwok in \cite{west}, but may belong to
folklore), we have $\alpha(\Gamma') \leq n' - \frac{e'}{\Delta'} = 9$, which proves that 
 $\alpha((\Gamma(M_2(\BB))) = 10$. Note that this is exactly equal to the lower bound from Theorem \ref {independence}.  
\end{Example}

\bigskip

In this paper, we have studied some graph invariants (diameter, clique number, etc.) of the Cayley graphs of semirings. The main emphasis was given to the study of matrix semirings. It might be interesting to examine how do these invariants behave under quotients, extensions and homomorphisms of these semirings. It might also be worth considering some other invariants (planarity, eigenvalues of the adjacency matrix, etc.) or to tackle the question to what degree does the Cayley graph determine the structure of the (matrix) semiring. 

\bigskip








\end{document}